\newtheorem{theorem}{Theorem}[section]
\newtheorem{corollary}[theorem]{Corollary}
\definecolor{dred}{rgb}{0.8,0.1,0.2}
\definecolor{dblue}{rgb}{0.8,0.2,0.9}
\def\bC{{\sf C}}
\def\C{{\mathbb C}}
\def\Cyc{{\mathscr C}}
\def\dDelta{\Delta}
\def\E{{\sf E}}
\def\epsilon{\varepsilon}
\def\F{{\mathscr F}^{\text{\raisebox{1.5pt}{$\scriptscriptstyle\rightarrow$}}}}
\def\Fe{{\sf F}}
\def\G{{\sf G}}
\def\GbN{\G^{\text{\scalebox{0.8}{\rotatebox{90}{$\bowtie$}}}N}}
\def\geq{\geqslant}
\def\H{{\mathbb H}}
\def\hb{h^{\text{\scalebox{0.8}{\rotatebox{90}{$\bowtie$}}}}}
\def\Hcyc{{\mathscr H}}
\def\HF{{\mathscr{H\! F}}^{\text{\raisebox{1.5pt}{$\scriptscriptstyle\rightarrow$}}}}
\def\R{{\mathbb R}}
\def\red#1{{\color{dred}{#1}}}
\def\S{{\mathfrak S}}
\def\Scyc{{\mathscr S}}
\def\SU{{\rm SU}}
\def\Tr{{\rm Tr}}
\def\U{{\sf U}}
\def\UUF{\mathscr{F}}
\def\V{{\sf V}}
\def\W{{\sf W}}
\def\Z{{\mathbb Z}}
\title{A colourful path to matrix-tree theorems}
\author{Adrien Kassel}
\address{Adrien Kassel -- CNRS, \'Ecole Normale Sup\'erieure de Lyon}
\email{adrien.kassel@ens-lyon.fr}
\author{Thierry L\'evy}
\address{Thierry L\'evy -- LPSM, Sorbonne Universit\'e, Paris}
\email{thierry.levy@sorbonne-universite.fr}
\date{\today}
\thanks{\textit{Acknowledgments of support.} A.K. thanks CNRS for support through a PEPS grant in 2018 and the Newton Institute for hospitality in 2015 during the programme \emph{Random Geometry}. T.L. would like to thank the Isaac Newton Institute for Mathematical Sciences for support and hospitality during the programme {\em Scaling limits, rough paths, quantum field theory} when work on this paper was pursued. This work was supported by EPSRC grant numbers EP/K032208/1 and EP/R014604/1.}
\keywords{matrix-tree theorem, graph, forests, cycles, Laplacian, determinant, Q-determinant, holonomy, ordered products, simplicial complexes, pseudoforests, circular and bicircular matroids}
\subjclass[2010]{05C30, 05C22, 15A15}
\begin{document}

\maketitle

\begin{abstract} 
In this short note, we revisit Zeilberger's proof of the classical matrix-tree theorem and give a unified concise proof of variants of this theorem, some known and some new.
\end{abstract}

\section{Introduction}

It is well known in combinatorics that determinants of matrices can be interpreted as weighted sums over paths in graphs. One famous and prominent example is the matrix-tree theorem and its variants which interpret minors of a discrete Laplacian as weighted counts of spanning forests. A beautiful proof of its most classical version was given by Doron Zeilberger in~\cite[Section~4]{Zeilberger}, a short paper which emphasizes this combinatorial approach to matrix algebra, a point of view which seems to have been newer and less popular at the time of its publication. 

In the course of our research in probability and stochastic processes on graphs with gauge symmetry~\cite{Kassel-ESAIM,KL1,KL-QSF}, we were led to reconsider this classical theorem, adding in new parameters in the guise of matrices attached to edges of a directed graph. While doing so we revisited Zeilberger's proof, in its memorable colourful re-enactment performed for us by David Wilson some years ago, and found a concise and pictorial way to prove in a unified way several such theorems, notably the classical one of Kirchhoff~\cite{Kirchhoff}, the signed version of Zaslavsky~\cite[Theorem~8A.4]{Zaslavsky}, the abelian-group-weighted version of Chaiken~\cite[Eq. (15) and the following unnumbered equation]{Chaiken}, a special case of which is the complex-weighted one of Forman~\cite[Theorem~1]{Forman}, and the quaternion case of Kenyon~\cite[Theorem~9]{Kenyon}. Let us emphasize that this proof is one of those which does not use the Cauchy--Binet theorem, a step which always blurs somewhat the understanding of cancellations in the signed combinatorial sums involved.

We believe this proof to be crisp and clear enough to be shared with the combinatorics community, and hope it can publicise Zaslavsky's, Chaiken--Forman's, and Kenyon's apparently lesser known theorems. We also believe this approach may inspire the discovery of new formulas and we indeed provide at least a few new identities, which although not definitive, show some insight into the combinatorics of graphs with parameters and the power of Zeilberger's approach.

The note is organised as follows. We first recall definitions about determinants, then proceed to give our revisited and unified proof. Last, we prove new results and discuss in an open-ended way some natural perspectives.

\section{A unified shorthand notation for determinant and $Q$-determinant}

Let $R$ be a ring and $S$ a commutative ring. Let $\tau:R\to S$ be a morphism of $\Z$-modules, that is, an additive map. Let us assume that $\tau$ is central, in the sense that for any $r,r'\in R$, we have $\tau(rr')=\tau(r'r)$. For all integers $n\geq 1$ and all $n\times n$ matrices $M\in M_{n}(R)$, we define the $\tau$-determinant of $M$ as the element of $S$ given by the formula
\begin{equation}\label{eq:deftau}
{\det}_{\tau}(M)=\sum_{\sigma\in \S_{n}} \epsilon(\sigma) \prod_{\substack{c \text{ cycle of } \sigma \\ c=(i_{1}\ldots i_{r})}} \tau(M_{i_{1}i_{2}}M_{i_{2}i_{3}}\ldots M_{i_{r}i_{1}})\,,
\end{equation}
where $\S_{n}$ denotes the symmetric group on $n$ elements.

We do not claim that the function ${\det}_{\tau}$ has particularly good properties on $M_{n}(R)$ in general. In particular, although it is $\Z$-multilinear in the columns and the rows of $M$, it is not alternating in general. However, its interest for us is that it unifies on the one hand the usual notion of determinant of a matrix with complex entries, or with entries in a ring of polynomials with complex coefficients, or indeed with entries in any commutative ring, by taking $S=R$ and $\tau={\rm id}_{R}$ ; and on the other hand the less usual notion of $Q$-determinant~\cite{Moore} (Moore's original work can only be found easily as a conference abstract, so we refer to~\cite[Section 5.1]{Mehta} for a more detailed textbook treatment) of a matrix with quaternionic entries, by taking $R=\H$, $S=\R$ and $\tau=\Re$, the real part map. It also applies to the situation where $R$ is a polynomial ring with quaternionic coefficients, and $S$ the polynomial ring with the same indeterminates and real coefficients.

\section{A matrix-tree theorem}\label{sec:MTT}

Let $n\geq 2$ be an integer and let $\V=\{1,\ldots,n\}$ be the set of vertices of a complete graph. Let $\E$ be the set of edges of this graph, that is, the set of ordered couples of distinct vertices: $\E=\{(i,j)\in \V^{2} : i\neq j\}$. We think of the edge $(i,j)$ as going from $i$ to $j$.\footnote{Since our edges will be weighted, all results can be easily re-interpreted to take into account graphs with multiple (weighted) edges. We can also allow self-loops by adding the diagonal coefficients $a_{ii}(1-h_{ii})$.}

Let $H$ be a ring, which we do not assume to be commutative. Examples that we have in mind are $\R$, $\C$, or $\H$. Let $R$ be the polynomial ring $H[a_{ij} : (i,j)\in \E]$ over $n(n-1)$ indeterminates. Let $\{h_{ij}, (i,j)\in \E\}$ be $n(n-1)$ elements of $H$. As the notation suggests, we think of $a_{ij}$ and $h_{ij}$ as being attached to the edge $(i,j)$.

We form the matrix $\Delta\in M_{n}(R)$ by setting, for all $(i,j)\in \E$,
\begin{equation}\label{eq:D1}
\Delta_{ij}=\red{-h_{ij}a_{ij}}
\end{equation}
and, for all $i\in \V$,
\begin{equation}\label{eq:D2}
\Delta_{ii}=\sum_{j\in \V \setminus\{i\}} a_{ij}\, .
\end{equation}
The use of a colour will play an instrumental role in the computation, by helping us to keep track of the origin of the coefficients.\footnote{For those reading this paper in black and white, note that the right-hand side of Eq.~\eqref{eq:D1} is red; moreover, in the rest of the text, replace the word \emph{red} by \emph{marked}. However, for those who can afford it, we recommend the coloured version of the paper.}

Let us choose a commutative ring $K$ and a central morphism of $\Z$-modules $\tau:H\to K$. Here, the examples we have in mind for $K$ are $\R$ or $\C$, and $\tau$ the identity map or the real part. Let us define $S=K[a_{ij} : (i,j)\in \E]$. The morphism $\tau$ extends to a morphism $R\to S$ that we still denote by $\tau$.

Let us fix an integer $m\in \{1,\ldots,n\}$. We consider the $m\times m$ principal submatrix $\Delta_{[m]}$ of~$\Delta$ obtained by erasing the rows and columns of indices greater than $m$. The version of the matrix-tree theorem that we are going to state computes the $\tau$-determinant of $\Delta_{[m]}$.

Let us define the \emph{boundary} of our graph, or its \emph{well}, as the subset $\W=\{m+1,\ldots,n\}\subset \V$. Let us also define $\U=\{1,\ldots,m\}=\V\setminus \W$, the set of inner vertices.

We say that a subset $\Fe\subset \E$ is a \emph{cycle-and-well-rooted spanning forest} (with well $\W$) if it contains exactly $m$ edges, one coming out of $i$ for each $i\in \U$ (see Figure~\ref{fig:wcrsf}).\footnote{These combinatorial objects or their variants are also called \emph{mappings}, \emph{pseudoforests}, or \emph{bases of the bicircular matroid} in the literature.} We denote by $\F_{m}$ the set of all cycle-and-well-rooted spanning forests with well $\W$. The arrow is meant to remind us that each element of $\F_{m}$ is a configuration of directed edges.

Consider $\Fe\in \F_{m}$. We set $a_{\Fe}=\prod_{(i,j)\in \Fe}a_{ij}$. Moreover, using the decomposition into connected components of the undirected graph underlying $(\V,\Fe)$, we can partition the directed graph $(\V,\Fe)$ into disjoint pieces, each of which is of one of two kinds: either a tree rooted at a vertex of $\W$ with all edges consistently pointing towards the root, or a cycle-rooted tree (also called a unicycle) contained in $\U$. To $\Fe$, we attach the collection $\Cyc(\Fe)$ of the cycles of its unicycles. It is a (possibly empty) family of pairwise disjoint simple cycles in $\U$. For each such cycle $c$, visiting the vertices $i_{1},\ldots,i_{r}$ in this cyclic order, we can form the ill-defined element $h_{c}=h_{i_{1}i_{2}}\ldots h_{i_{r}i_{1}}$ of $H$, and the well-defined element $\tau(h_{c})=\tau(h_{i_{1}i_{2}}\ldots h_{i_{r}i_{1}})$ of $K$.

\begin{theorem}\label{thm:MTKZ}
In the ring $S=K[a_{ij},(i,j)\in \E]$, we have the equality
\[{\det}_{\tau} \Delta_{[m]}=\sum_{\Fe\in \F_{m}} a_{\Fe} \prod_{c\in \Cyc(\Fe)} \big(1-\tau(h_{c})\big)\,.\]
\end{theorem}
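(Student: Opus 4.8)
The plan is to expand ${\det}_{\tau} \Delta_{[m]}$ directly from the definition~\eqref{eq:deftau}, the sum now running over $\sigma\in\S_m$, and to read off each resulting monomial as a combinatorial configuration, following Zeilberger. A permutation $\sigma$ contributes a product over its cycles, and I would treat the fixed points of $\sigma$ separately from its non-trivial cycles. For a fixed point $i$ the diagonal entry $\Delta_{ii}=\sum_{j\in\V\setminus\{i\}}a_{ij}$ expands as a sum, so selecting one term amounts to choosing an outgoing edge $(i,j)$ with weight $a_{ij}$ (neither red nor signed). For a non-trivial cycle $c=(i_1\ldots i_r)$ of $\sigma$ the factor is $\tau(\Delta_{i_1 i_2}\cdots\Delta_{i_r i_1})$; since the indeterminates $a_{ij}$ are central in $R$, this equals $(-1)^r a_c\,\tau(h_c)$ with $a_c=a_{i_1 i_2}\cdots a_{i_r i_1}$, and the centrality of $\tau$ is precisely what makes $\tau(h_c)$ independent of the chosen starting point of the cycle.

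Combined with a target choice for each fixed point, every monomial in the expansion then corresponds to a function $f:\U\to\V$ with $f(i)\neq i$, that is, to an element $\Fe\in\F_m$: one edge out of each inner vertex, namely $f(i)=\sigma(i)$ when $i$ lies on a non-trivial cycle, and the chosen target otherwise. The crucial observation is that the non-trivial cycles of $\sigma$ are necessarily cycles of the functional graph of $f$, hence lie in $\Cyc(\Fe)$; conversely, the tree vertices and the vertices lying on a cycle \emph{not} realised by $\sigma$ are forced to be fixed points. This sets up a bijection between the pairs $(\sigma,\text{targets})$ producing a given $\Fe$ and the subsets $A\subseteq\Cyc(\Fe)$, where $A$ records which cycles of $\Fe$ appear as non-trivial cycles of $\sigma$.

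Next I would compute the weight of the configuration indexed by $(\Fe,A)$. The sign bookkeeping is the heart of the matter: an $r$-cycle has signature $(-1)^{r-1}$, while the product of the $r$ red off-diagonal entries along it carries $(-1)^r$, so the two combine to a single $-1$ irrespective of $r$. Thus a cycle $c\in A$ contributes $-a_c\,\tau(h_c)$, a cycle $c\in\Cyc(\Fe)\setminus A$ contributes $a_c$, and each tree edge contributes its $a_{ij}$; multiplying everything and factoring out $a_\Fe=\prod_{(i,j)\in\Fe}a_{ij}$, the total weight collapses to $a_\Fe\prod_{c\in A}(-\tau(h_c))$. Summing over all $A\subseteq\Cyc(\Fe)$ through the identity $\sum_A\prod_{c\in A}x_c=\prod_c(1+x_c)$ with $x_c=-\tau(h_c)$ produces the factor $\prod_{c\in\Cyc(\Fe)}(1-\tau(h_c))$, and summing over $\Fe\in\F_m$ gives the claimed formula.

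I expect the main obstacle to be the careful verification that $(\sigma,\text{targets})\mapsto(\Fe,A)$ is a genuine bijection, in particular that every pair is attained exactly once and that the non-trivial cycles of $\sigma$ can indeed only sit among the cycles of $\Fe$; once the signs $(-1)^{r-1}$ and $(-1)^r$ are both identified, the cancellation is immediate and uniform in $r$, and the rest is the routine resummation over subsets of cycles.
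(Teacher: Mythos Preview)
Your proposal is correct and follows essentially the same route as the paper's proof. The only difference is terminological: where the paper colours the cycles coming from non-trivial cycles of $\sigma$ \emph{red} and speaks of ``cycle-coloured cycle-and-well-rooted spanning forests'', you record the same data as a subset $A\subseteq\Cyc(\Fe)$; the bijection, the sign computation $(-1)^{r-1}\cdot(-1)^r=-1$, and the final resummation over subsets of cycles are identical.
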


\begin{proof} Let us write the definition \eqref{eq:deftau} of the $\tau$-determinant of $\Delta_{[m]}$ and separate, in the contribution of each permutation, the fixed points from the cycles of length at least $2$. We have
\begin{align}
\nonumber
{\det}_{\tau}\Delta_{[m]}&=\sum_{\sigma \in \S_{m}} \epsilon(\sigma) \prod_{c=(i_{1}\ldots i_{r})} \tau(\Delta_{i_{1}i_{2}}\ldots \Delta_{i_{r}i_{1}})\\
&=\sum_{\sigma \in \S_{m}} \epsilon(\sigma) \prod_{i:\, \sigma(i)=i} \tau(\Delta_{ii}) \prod_{c=(i_{1}\ldots i_{r}),\, r\geq 2} \tau(\Delta_{i_{1}i_{2}}\ldots \Delta_{i_{r}i_{1}})\, .
\label{eq:somme}
\end{align}

For each permutation $\sigma\in \S_{m}$, we will interpret the term of this sum indexed by $\sigma$ as a sum of a certain number of contributions, each being attached to a certain configuration of black and red\footnote{Or \emph{marked}.} edges on our graph. There will be, for each permutation $\sigma$, a number of edge configurations equal to $(n-1)^{f}$, where $f$ is the number of fixed points of $\sigma$. Each edge configuration will be a cycle-and-well-rooted spanning forest, in which some of the cycles are made of red edges. 

To be more precise, let us consider a particular permutation $\sigma$. For each cycle $c=(i_{1}\ldots i_{r})$ of $\sigma$ with length at least $2$, we have
\[\tau(\Delta_{i_{1}i_{2}}\ldots \Delta_{i_{r}i_{1}})=(-1)^{r} \tau(\red{h_{i_{1}i_{2}}\ldots h_{i_{r}i_{1}}}) \red{a_{i_{1}i_{2}}\ldots a_{i_{r}i_{1}}}\,.\]
We think of this term as associated with a cycle of red edges on our graph, namely the cycle formed by the edges $\red{(i_{1},i_{2})},\ldots,\red{(i_{r},i_{1})}$.

Then, for each fixed point $i$ of $\sigma$, we have 
\[\tau(\Delta_{ii})=\tau(a_{i1}+\ldots + \widehat{a_{ii}}+\ldots + a_{in})=a_{i1}+\ldots + \widehat{a_{ii}}+\ldots + a_{in}\,.\]
We think of these $n-1$ terms as being respectively associated with the $n-1$ possible ways of drawing one edge in our graph coming out of the vertex $i$. This edge, as the notation suggests, is a black edge.

Expanding the term indexed by $\sigma$ in \eqref{eq:somme} over the sum produced by each fixed point of $\sigma$ makes it appear as the sum of $(n-1)^{f}$ contributions, where $f$ is the number of fixed points of $\sigma$, each contribution being attached to a configuration of black and red edges on our graph. Each edge configuration contains exactly one edge coming out of each vertex of $\U=\{1,\ldots,m\}$ and is thus, in the terminology introduced above, a cycle-and-well-rooted spanning forest. Moreover, in each cycle-and-well-rooted spanning forest that appears, some of the cycles are made of red edges. Here the phrase {\em some of the cycles} is a shorthand for {\em an arbitrary, possibly empty and possibly full, subset of the set of cycles}. 

Let us call {\em cycle-coloured cycle-and-well-rooted spanning forest} a configuration of black and red edges on the graph of the kind that we just described, and illustrated in Figure \ref{fig:wcrsf} below. 

\begin{figure}[h!]
\begin{center}
\includegraphics{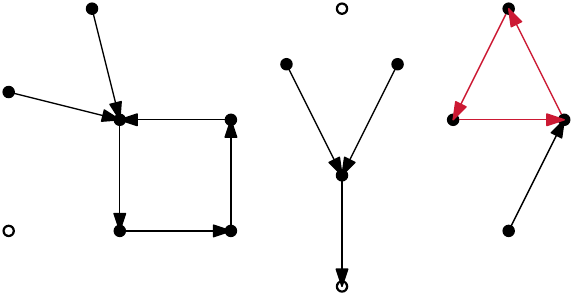}
\caption{\label{fig:wcrsf} A cycle-coloured cycle-and-well-rooted spanning forest. The vertices belonging to the well are white.}
\end{center}
\end{figure}

We claim that each cycle-coloured cycle-and-well-rooted spanning forest appears once and only once in the whole sum \eqref{eq:somme}. Indeed, consider such an edge configuration. Form the permutation~$\sigma$ of $\{1,\ldots,m\}$ of which the cycles of length at least $2$ are exactly the red cycles of the edge configuration. The fixed points of $\sigma$ are thus the vertices of $\U$ out of which comes a black edge. Then this particular edge configuration appears exactly once in the term of \eqref{eq:somme} indexed by that permutation $\sigma$, and in no other.

Finally, the contribution of a cycle-coloured cycle-and-well-rooted spanning forest to \eqref{eq:somme} is the product of 
\begin{itemize}
\item the indeterminates $a_{ij}$ attached to the edges of the configuration, regardless of their colour,
\item a term $-\tau(\red{h_{c}})$ for each red cycle $c$ of the configuration.
\end{itemize}
The minus sign which appears in the contribution of a red cycle of length $r$ is the product of a sign $(-1)^{r}$ coming from the fact that the off-diagonal entries of $\Delta$ carry a minus sign, and the contribution $(-1)^{r-1}$ of this cycle to the signature of $\sigma$.

At this point, we have replaced the sum \eqref{eq:somme} by a larger sum indexed by (black and red) cycle-coloured cycle-and-well-rooted spanning forests. We are now going to reorganise the terms of this larger sum and make it into a sum over (black\footnote{That is, \emph{uncoloured}, or \emph{unmarked}.}) cycle-and-well-rooted spanning forests. Indeed, let us consider a cycle-and-well-rooted spanning forest $\Fe\in \F_{m}$. We will assign to $\Fe$ the sum of all the contributions coming from the various ways of colouring none, some, or all of the cycles of $\Fe$ in red. There are $2^{b}$ contributions, where $b$ is the number of cycles of $\Fe$, and their sum is equal to the product of 
\begin{itemize}
\item the indeterminates $a_{ij}$ attached to the edges of $\Fe$, that is, $a_{\Fe}$,
\item a term $1-\tau(\red{h_{c}})$ for each cycle $c$ of $\Fe$, accounting for the fact that this cycle can be left black or coloured in red.
\end{itemize}
This is exactly the announced formula.
\end{proof}

In many applications, the indeterminates $\{a_{ij}, (i,j)\in \E\}$ are specialised in a symmetric way, that is, in such a way that the equality $a_{ij}=a_{ji}$ is satisfied for each edge $(i,j)\in \E$. Theorem \ref{thm:MTKZ} can be nicely adapted to this situation. Indeed, in this symmetric case, the monomial $a_{\Fe}$ attached to a cycle-and-well-rooted spanning forest $\Fe$ is not affected by reversing the orientation of one or several cycles of $\Fe$. To acknowledge this invariance, let us declare equivalent two elements of~$\F_{m}$ which differ only by the orientation of some of their cycles, and let us denote by $\UUF_{m}$ the quotient set. 

Let $[\Fe]$ be the class in $\UUF_{m}$ of a cycle-and-well-rooted spanning forest $\Fe$. The obvious statement that the elements of $[\Fe]$ are deduced from $\Fe$ by changing the orientation of some of the cycles of $\Fe$ comes with the nice twist that the two orientations of a cycle of length $2$ are in fact one and the same. For this reason, if $c$ is a cycle of $\Fe$ of length $2$, then $c=c^{-1}$, hence $\tau(h_{c})=\tau(h_{c^{-1}})$ is well defined and depends only on $[\Fe]$. On the other hand, if $c$ is a cycle of $\Fe$ of length at least $3$, then it is the pair $\{\tau(h_{c}),\tau(h_{c^{-1}})\}$ that depends only on $[\Fe]$. 

Summing the expression of ${\det}_{\tau}\Delta_{[m]}$ provided by Theorem \ref{thm:MTKZ} over equivalence classes yields the following result. We denote the length of a cycle $c$ by $\ell(c)$.

\begin{corollary}\label{cor:sym} In the quotient ring $S/(a_{ij}-a_{ji}:(i,j)\in \E)$, we have the equality
\[{\det}_{\tau} \Delta_{[m]}=\sum_{[\Fe]\in \UUF_{m}} a_{\Fe} \prod_{\substack{c\in \Cyc(\Fe)\\ \ell(c)=2}} \big(1-\tau(h_{c})\big) \prod_{\substack{c\in \Cyc(\Fe)\\ \ell(c)\geq 3}} \big(2-\tau(h_{c})-\tau(h_{c^{-1}})\big)\,.\]
\end{corollary}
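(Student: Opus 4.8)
The plan is to derive Corollary~\ref{cor:sym} directly from Theorem~\ref{thm:MTKZ} by grouping the terms of the sum over $\F_{m}$ into the equivalence classes of $\UUF_{m}$. Working in the quotient ring $S/(a_{ij}-a_{ji}:(i,j)\in\E)$, the first observation I would record is that the monomial $a_{\Fe}=\prod_{(i,j)\in\Fe}a_{ij}$ is invariant under reversing the orientation of any cycle of $\Fe$, since reversing a cycle replaces a factor $a_{ij}$ by $a_{ji}$ and these are identified in the quotient. Thus $a_{\Fe}$ descends to a well-defined monomial $a_{[\Fe]}$ attached to the class $[\Fe]\in\UUF_{m}$, and I may write $a_{\Fe}=a_{[\Fe]}$ for every representative.

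Next I would fix a class $[\Fe]\in\UUF_{m}$ and compute the sum of the contributions of all its representatives. The representatives of $[\Fe]$ are obtained from a chosen $\Fe$ by independently choosing an orientation for each cycle in $\Cyc(\Fe)$. Here the crucial point, already flagged in the text, is that a cycle of length $2$ admits only one orientation ($c=c^{-1}$), whereas a cycle of length at least $3$ admits two genuinely distinct orientations. Consequently, the representatives of $[\Fe]$ are indexed by the choice of one of two orientations for each cycle of length $\geq 3$, while the cycles of length $2$ contribute no choice. Writing $\Fe_{0}$ for a fixed representative and factoring the product over cycles accordingly, the sum over representatives of $[\Fe]$ factors as
\[
\sum_{\Fe'\in[\Fe]} a_{\Fe'}\prod_{c\in\Cyc(\Fe')}\big(1-\tau(h_{c})\big)
= a_{[\Fe]}\prod_{\substack{c\in\Cyc(\Fe_{0})\\ \ell(c)=2}}\big(1-\tau(h_{c})\big)\prod_{\substack{c\in\Cyc(\Fe_{0})\\ \ell(c)\geq 3}}\Big(\big(1-\tau(h_{c})\big)+\big(1-\tau(h_{c^{-1}})\big)\Big),
\]
where each length-$\geq 3$ cycle contributes the sum of its two oriented values because the two orientations yield the two representatives differing in that cycle. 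Simplifying the inner sum gives $2-\tau(h_{c})-\tau(h_{c^{-1}})$, which is exactly the factor appearing in the corollary, and which is symmetric in the pair $\{\tau(h_{c}),\tau(h_{c^{-1}})\}$ and hence well defined on $[\Fe]$.

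Finally I would reassemble: since $\F_{m}=\bigsqcup_{[\Fe]\in\UUF_{m}}[\Fe]$ is a partition of the index set of the sum in Theorem~\ref{thm:MTKZ}, summing the per-class contributions above over all classes yields the announced expression for ${\det}_{\tau}\Delta_{[m]}$ in the quotient ring. The only subtlety worth checking carefully is the well-definedness of each factor on the class, namely that the factor attached to a length-$2$ cycle uses $\tau(h_{c})$ with $c=c^{-1}$ so that no ambiguity arises, and that the factor $2-\tau(h_{c})-\tau(h_{c^{-1}})$ attached to a longer cycle is symmetric under $c\mapsto c^{-1}$; both are immediate from the discussion preceding the statement.

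I do not expect a genuine obstacle here: the argument is a bookkeeping reorganisation of a finite sum, and the main thing to get right is the orientation-counting for the two cycle-length regimes. The one place demanding care is ensuring that the identification $a_{ij}=a_{ji}$ is genuinely needed and used exactly where claimed, namely to make $a_{\Fe}$ class-invariant; without passing to the quotient, reversing a cycle of length $\geq 3$ would change the monomial, and the grouping would fail.
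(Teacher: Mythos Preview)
Your proposal is correct and follows essentially the same approach as the paper: it derives the corollary from Theorem~\ref{thm:MTKZ} by partitioning $\F_{m}$ into equivalence classes, using the quotient relation $a_{ij}=a_{ji}$ to make $a_{\Fe}$ class-invariant, and then summing the cycle factors over the one (for $\ell(c)=2$) or two (for $\ell(c)\geq 3$) orientations of each cycle. Your write-up is in fact more detailed than the paper's, which simply states that summing over equivalence classes yields the result.
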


\section{Special cases}

\subsection{The classical matrix-tree theorem}

Consider $H=K=\R$ and let $\tau$ be the identity map. Take $h_{ij}=1$ for each edge $(i,j)$. Then in the sum in Theorem \ref{thm:MTKZ}, non-zero contributions arise only from cycle-and-well-rooted spanning forests without any cycle, that is, from spanning forests rooted in the well. We recover exactly the classical matrix-tree theorem.

\subsection{Forman's theorem}

Consider $H=K=\C$ and let $\tau$ be the identity map. Take $h_{ij}$ to be any complex number for each edge $(i,j)$. Then we recover Forman's matrix-tree theorem with holonomies\footnote{We call {\em holonomies} the elements $h_{ij}$ and their products along paths in the graph. This terminology is inspired by the setup where the graph is embedded in a manifold and where $h_{ij}$ is the parallel transport of a connection along edge $ij$.} along the edges that are complex numbers (\cite[Theorem~1]{Forman}). If the variables $a_{ij}$ are symmetric, and if the holonomies are taken to be complex numbers such that $h_{ji}=\overline{h_{ij}}$ for each edge $(i,j)$, then we can apply Corollary \ref{cor:sym} to find that cycles of length $2$ contribute by a factor $1-\Re(h_{c})$ and cycles of length at least $3$ by a factor $2-2\Re(h_{c})$. Assuming moreover that the complex numbers~$h_{ij}$ have modulus $1$ forbids the presence of cycles of length $2$ in any of the cycle-and-well rooted forests in the sum, since in this case, $h_{c}=1$ for any such cycle.

\subsection{Chaiken's theorem}

Consider $H=K=\Z[\Gamma]$, where $\Gamma$ is an abelian group, and $\tau$ the identity map. Then Theorem~\ref{thm:MTKZ} is the formula in the middle of p.326 of~\cite{Chaiken} in the case of principal minors. By further specialising $\Gamma$ to be the multiplicative group $\{\pm 1\}$, we recover Zaslavsky's theorem about signed graphs (\cite[Theorem~8A.4]{Zaslavsky}), also a special case of Forman's theorem.

\subsection{Kenyon's theorem}

Consider $H=\H$, $K=\R$ and let $\tau$ be the real part map. Choose the quaternions $h_{ij}$ to be of norm $1$ for each edge $(i,j)$, and such that $h_{ji}=\overline{h_{ij}}$. Then, up to the identification of the unit sphere of $\H$ with $\SU(2)$, Corollary \ref{cor:sym} reduces to Kenyon's theorem with holonomies in $\SU(2)$ (\cite[Theorem~9]{Kenyon}, which Kenyon stated in the case of an empty well only, although his proof extends readily to the case of a non-empty well).

\section{Graphs with holonomy in a space of matrices}\label{sec:MTTN}

In our research about stochastic processes on graphs with gauge symmetry~\cite{Kassel-ESAIM, KL1, KL-QSF}, our main motivation was to understand the determinant of $\Delta_{[m]}$ when the elements $h_{ij}$ are matrices, for instance unitary matrices. This is an ill-posed question in the sense that several expressions, with various merits and demerits, can be given for this determinant. 

For example, if we are interested in $N\times N$ complex matrices, we can apply Theorem \ref{thm:MTKZ} with $H=M_{N}(\C)$, $K=\C$ and $\tau(\cdot)=\frac{1}{N}\Tr(\cdot)$. One difficulty with the formula that we obtain in this way is that it seems difficult to make sense of the definition ${\det}_{\tau}\Delta_{[m]}$. We propose an alternative approach, in which we imitate and adapt the proof of Theorem \ref{thm:MTKZ}.

Let us describe the situation again. We have a ring $H$, not assumed to be commutative, a commutative ring $K$ and a central morphism $\tau:H\to K$.
We have the set of vertices $\V=\{1,\ldots,n\}$,  in which a certain subset $\W=\{m+1,\ldots,n\}$ is called the well. To each edge $(i,j)\in \E$ is attached an indeterminate $a_{ij}$ and an $N\times N$ matrix $h_{ij}\in M_N(H)$. We form the matrix $\Delta$ following \eqref{eq:D1} and \eqref{eq:D2}, with $a_{ij}$ understood as $a_{ij}I_{N}$ in \eqref{eq:D2}. It is an $n\times n$ matrix of $N\times N$ matrices, which we see in the most natural way as an $Nn\times Nn$ matrix with entries in~$H$. 

We will compute the $\tau$-determinant of the $Nm\times Nm$ matrix $\Delta_{[Nm]}$ obtained from $\Delta$ by erasing the rows and columns of indices greater than $Nm$.

For this, we introduce, above our usual graph $\G=(\V,\E)$, a new graph which we denote by~$\GbN$. The set of vertices of this new graph is $\V\times \{1,\ldots,N\}$. Thus, a vertex of $\GbN$ is a pair $(i,k)$ where $i\in \{1,\ldots,n\}$ is a vertex of $\G$ and $k\in \{1,\ldots,N\}$ can be used to index a row or a column of an $N\times N$ matrix. The edges of $\GbN$ are all pairs $((i,k),(j,l))$ such that $(i,j)$ is an edge of $\G$ and $(k,l)\in \{1,\ldots,N\}^{2}$. It is an important feature of the graph $\GbN$ that it contains no `vertical' edge (\emph{i.e.} an edge of the form $((i,k),(j,l))$ with $i=j$).\footnote{We are not aware of a standard name for this simple construction, but as pointed out to us by the referee, a more combinatorially idiomatic characterisation is that $\GbN$ is the lexicographic product of $\G$ with the edgeless graph on $N$ vertices.}

\begin{figure}[h!]
\begin{center}
\includegraphics{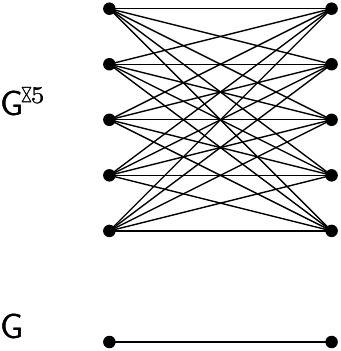}
\caption{\label{fig:bt} \small The new graph
has $N^{2}$ directed edges over each directed edge of the old graph.}
\end{center}
\end{figure}

To each edge $((i,k),(j,l))$ of $\GbN$, we associate on the one hand the indeterminate $a_{ij}$ and on the other hand the element $\hb_{((i,k),(j,l))}=(h_{ij})_{kl}$ of $H$, that is, the $(k,l)$-th entry of the matrix~$h_{ij}$.  

We define the well of the graph $\GbN$ as the set $\W\times \{1,\ldots,N\}$ of all vertices located `above' a vertex of $\W$. Our formula involves a particular subset of the set $\F_{m,N}$ of cycle-and-well-rooted spanning forests in $\GbN$. To describe this set, let us define a horizontal edge as an edge $((i,k),(j,l))$ such that $k=l$. Then, we say that an element $\Fe$ of $\F_{m,N}$ is \emph{horizontal} if every edge of $\Fe$ that is not in a cycle is horizontal. We denote by $\HF_{m,N}$ the corresponding subset of~$\F_{m,N}$. Moreover, we say that a cycle of $\Fe$ is horizontal if it consists only of horizontal edges. We say that a cycle is \emph{skew} if it is not horizontal. We denote by $\Hcyc(\Fe)$ the set of horizontal cycles of $\Fe$, and by $\Scyc(\Fe)$ the set of its skew cycles. For every cycle $c$ of $\Fe$, we define $\hb_{c}$ as the product of the complex numbers $\hb_{((i,k),(j,l))}$ associated with the edges which constitute $c$.

Our result is the following.

\begin{theorem}\label{thm:MTKZN}
In the ring $S=K[a_{ij},(i,j)\in \E]$, we have the equality
\[{\det}_{\tau}  \Delta_{[Nm]}=\sum_{\Fe\in \HF_{m,N}} a_{\Fe} \prod_{c\in \Hcyc(\Fe)} \big(1-\tau(\hb_{c})\big) \prod_{c\in \Scyc(\Fe)}\big(-\tau(\hb_{c})\big)\,.\]
\end{theorem}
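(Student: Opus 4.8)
The plan is to transcribe the proof of Theorem~\ref{thm:MTKZ} onto the expanded graph $\GbN$, treating $\Delta_{[Nm]}$ as an $Nm\times Nm$ matrix with entries in $R$ whose rows and columns are indexed by the inner vertices $\U\times\{1,\dots,N\}$ of $\GbN$. First I would record the block structure of $\Delta$: for $i\neq j$ the entry at $((i,k),(j,l))$ is $-\hb_{((i,k),(j,l))}\,a_{ij}$, while the diagonal block $\Delta_{ii}=(\sum_{j\neq i}a_{ij})I_N$ contributes $\sum_{j\neq i}a_{ij}$ at $((i,k),(i,k))$ and, crucially, $0$ at $((i,k),(i,l))$ for $k\neq l$. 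This vanishing is exactly the statement that $\GbN$ has no vertical edge, and it is the one structural fact that drives the whole argument.

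Then, writing the definition~\eqref{eq:deftau} over $\S_{Nm}$ and separating fixed points from cycles of length $\geq 2$, I would analyse the two kinds of factors just as in Theorem~\ref{thm:MTKZ}. A cycle $c=((i_1,k_1)\dots(i_r,k_r))$ of $\sigma$ gives a nonzero term only if $i_a\neq i_{a+1}$ for each consecutive pair (otherwise a vertical, hence zero, entry appears), so $c$ is a genuine directed cycle of $\GbN$ and contributes $(-1)^r\tau(\hb_c)\,a_c$. A fixed point $(i,k)$ contributes $\sum_{j\neq i}a_{ij}$; I would expand this sum and interpret each monomial $a_{ij}$ as the black horizontal edge $((i,k),(j,k))$. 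Because this fixed-point term carries no holonomy, the choice of target level is free, and fixing it to be horizontal is precisely what makes the expansion biject with edge configurations of $\GbN$.

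With this dictionary, the term indexed by $\sigma$ becomes a sum of $(n-1)^f$ contributions, each attached to a configuration of black and red edges in $\GbN$ in which the red edges form the cycles of $\sigma$ of length $\geq 2$ and every black edge is horizontal. Such a configuration is a cycle-and-well-rooted spanning forest whose tree (non-cycle) edges, being black, are horizontal; after forgetting colours it is therefore an element of $\HF_{m,N}$, carrying a red colouring on some of its cycles. The same reconstruction as in Theorem~\ref{thm:MTKZ} shows that each such cycle-coloured horizontal forest arises exactly once, for the unique $\sigma$ whose cycles of length $\geq 2$ are its red cycles, and the sign attached to a red cycle of length $r$ is again $(-1)^r\cdot(-1)^{r-1}=-1$, giving the factor $-\tau(\hb_c)$ per red cycle and $+a_c$ per black cycle.

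Finally I would reorganise by summing over uncoloured horizontal forests $\Fe\in\HF_{m,N}$ and over colourings of their cycles. Here lies the one genuinely new point, and the step I expect to be the crux: a black cycle is built from black edges, which are horizontal, so a skew cycle can never be coloured black and contributes only its red value $-\tau(\hb_c)$, whereas a horizontal cycle may be left black or coloured red and contributes $1-\tau(\hb_c)$. Summing these two regimes over the cycles of each $\Fe$, with prefactor $a_\Fe$, produces the announced product $\prod_{c\in\Hcyc(\Fe)}(1-\tau(\hb_c))\prod_{c\in\Scyc(\Fe)}(-\tau(\hb_c))$, which is the claimed identity. The sign and multilinearity bookkeeping is routine and identical to the one-level case; the only thing demanding care is keeping the horizontality constraint consistent between the tree edges and the colouring of the cycles.
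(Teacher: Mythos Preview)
Your proposal is correct and follows essentially the same route as the paper's own proof: expand ${\det}_\tau$ over $\S_{Nm}$, kill the permutations with a vertical step via the diagonal block structure, assign red cycles to the nontrivial cycles of $\sigma$ and horizontal black edges to the fixed points, and then regroup by uncoloured horizontal forests while observing that skew cycles are forced red whereas horizontal cycles may be either colour. The emphasis you place on the horizontality constraint as the one genuinely new ingredient matches the paper's discussion exactly.
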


\begin{proof} The proof is very similar to the proof of Theorem \ref{thm:MTKZ}. The main difference is that we see the $Nm$ indices of the rows and of the columns of $\Delta_{[Nm]}$ as corresponding to the $Nm$ vertices of the new graph $\GbN$: the vertex $(i,k)$ corresponds to the $(N(i-1)+k)$-th row, or column.

We expand the determinant of $\Delta_{[Nm]}$ as a sum indexed by permutations, which we really think of as permutations of the set $\{1,\ldots,m\}\times \{1,\ldots,N\}$.
We separate, for each permutation, the  fixed points from the non-trivial cycles. 

The contribution of each non-trivial cycle is understood as being associated with a cycle of red edges on the graph $\GbN$. This is possible because the diagonal $N\times N$ blocks of $\Delta$ are diagonal $n\times n$ matrices (recall the third paragraph of the current Section~\ref{sec:MTTN}), so that the contribution of any permutation containing a cycle which would produce a `vertical' edge vanishes. To be clear, the contribution of a permutation $\sigma$ such that there exists $i\in \{1,\ldots,m\}$ and $k,l$ distinct elements of $\{1,\ldots,N\}$ such that $\sigma(i,k)=(i,l)$ is zero, because $\Delta_{(i,k),(i,l)}=0$.

The fixed points contribute by producing a sum over all ways of completing the configuration of red cycles that we just obtained, by adding a horizontal black edge coming out of every vertex of $\{1,\ldots,m\}\times \{1,\ldots,N\}$ that is not already visited by a red cycle. 

Let us emphasize that there is something arbitrary in our decision of assigning the contributions of fixed points to {\em horizontal} edges (we come back to this point in Section~\ref{sec:horizontal}). Let us at least spell out what this exactly means: if $(i,k)$ is a fixed point of $\sigma$, then in expanding the sum $a_{i1}+\ldots + \widehat{a_{ii}}+\ldots + a_{in}$, we will attach, for each $j\neq i$, the weight $a_{ij}$ to the edge $((i,k),(j,k))$.

The outcome of this procedure is an expression of the determinant of $\Delta_{[Nm]}$ as a sum over cycle-and-well-rooted spanning forests in $\GbN$, with well $\{m+1,\ldots,n\}\times \{1,\ldots,N\}$. Thanks to our decision of assigning the contributions of fixed points to horizontal edges, all the forests which appear are horizontal, in the sense that every edge that is not in a cycle is horizontal. Moreover, each of these forests has  a certain number of its cycles made of red edges. To complete our description, we must say that any cycle that is not horizontal must be made of red edges, because all black edges are horizontal. 

Just as in the proof of Theorem \ref{thm:MTKZ}, we can check that every horizontal cycle-and-well-rooted spanning forest in $\GbN$ with all its skew cycles and an arbitrary subset of the set of its horizontal cycles made of red edges contributes exactly once to the sum, with a contribution equal to the product of 
\begin{itemize}
\item the indeterminates $a_{ij}$ attached to the edges of the configuration, regardless of their colour,
\item a term $-\tau(\red{\hb_{c}})$ for each red cycle $c$.
\end{itemize}
Reorganising the sum by summing first over sets of configurations which differ only by the colour of their edges, and taking into account the fact that skew cycles are always red, whereas horizontal cycles can be either black or red, we obtain the announced formula.
\end{proof}

In the case where the indeterminates $\{a_{ij},(i,j)\in \E\}$ are symmetric, the discussion of the end of Section~\ref{sec:MTT} applies and, by summing over equivalence classes of cycle-and-well-rooted spanning forests and using self-explanatory notation, we obtain the following result.

\begin{corollary}\label{cor:symN} In the quotient ring $S/(a_{ij}-a_{ji}:(i,j)\in \E)$, we have the equality
\begin{align*}
{\det}_{\tau}  \Delta_{[Nm]}=&\sum_{[\Fe]\in \mathscr{H}\!\UUF_{m,N}} a_{\Fe} \prod_{\substack{c\in \Hcyc(\Fe)\\ \ell(c)=2}} \big(1-\tau(h_{c})\big) \prod_{\substack{c\in \Hcyc(\Fe)\\ \ell(c)\geq 3}} \big(2-\tau(h_{c})-\tau(h_{c^{-1}})\big)\\
&\hspace{4cm}\prod_{\substack{c\in \Scyc(\Fe)\\ \ell(c)=2}} \big(-\tau(h_{c})\big) \prod_{\substack{c\in \Scyc(\Fe)\\ \ell(c)\geq 3}} \big(-\tau(h_{c})-\tau(h_{c^{-1}})\big)
\,.
\end{align*}
\end{corollary}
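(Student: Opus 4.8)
The plan is to deduce this from Theorem~\ref{thm:MTKZN} exactly as Corollary~\ref{cor:sym} was deduced from Theorem~\ref{thm:MTKZ}: I would pass to the quotient ring $S/(a_{ij}-a_{ji}:(i,j)\in\E)$ and reorganise the sum over $\HF_{m,N}$ by grouping together those horizontal cycle-and-well-rooted spanning forests that differ only by the orientation of some of their cycles. Declaring two such forests equivalent defines the quotient set $\mathscr{H}\!\UUF_{m,N}$, and summing the formula of Theorem~\ref{thm:MTKZN} over each equivalence class will produce, cycle by cycle, the four kinds of factors appearing in the statement.

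Before carrying out this grouping, I would record the structural facts that make it legitimate on the graph $\GbN$. First, since an edge $((i,k),(j,l))$ is horizontal precisely when $k=l$, and reversing it yields $((j,l),(i,k))$ which is horizontal under the same condition, reversing a cycle preserves the horizontality of each of its edges. Consequently reversal sends horizontal cycles to horizontal cycles and skew cycles to skew cycles, and leaves the edges outside the cycles untouched; in particular a \emph{horizontal} forest stays horizontal, so the equivalence relation is well defined on $\HF_{m,N}$ and the sets $\Hcyc(\Fe)$ and $\Scyc(\Fe)$ depend, up to reorientation of their elements, only on the class $[\Fe]$. Second, reversing a cycle replaces each monomial $a_{ij}$ by $a_{ji}$, so in the quotient ring the weight $a_{\Fe}$ is unchanged and descends to a well-defined function of $[\Fe]$. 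Since the holonomies $\tau(\hb_{c})$ involve no indeterminate $a_{ij}$, they are unaffected by the passage to the quotient.

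With these observations the class sum factorises over the cycles of $\Fe$, each cycle of length at least $3$ contributing independently its two orientations while a cycle of length $2$ contributes only once, because there $c=c^{-1}$ as a set of directed edges and $\tau(\hb_{c})=\tau(\hb_{c^{-1}})$ by centrality of $\tau$. Summing the factor $1-\tau(\hb_{c})$ attached in Theorem~\ref{thm:MTKZN} to a horizontal cycle gives $1-\tau(h_{c})$ in length $2$ and $2-\tau(h_{c})-\tau(h_{c^{-1}})$ in length at least $3$; summing the factor $-\tau(\hb_{c})$ attached to a skew cycle --- which, being red by necessity, never carries the alternative black contribution --- gives $-\tau(h_{c})$ in length $2$ and $-\tau(h_{c})-\tau(h_{c^{-1}})$ in length at least $3$. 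These are precisely the four products in the statement. The only point requiring genuine care, and the one feature absent from the proof of Corollary~\ref{cor:sym}, is the verification of the structural facts of the second paragraph --- namely that orientation reversal respects the horizontal/skew dichotomy and the horizontality of the ambient forest; once this is in place the rest is the same bookkeeping as in the unweighted symmetric case.
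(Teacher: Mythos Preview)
Your proposal is correct and follows exactly the route the paper takes: it deduces the corollary from Theorem~\ref{thm:MTKZN} by passing to the symmetric quotient and summing over orientation-equivalence classes, precisely as Corollary~\ref{cor:sym} was obtained from Theorem~\ref{thm:MTKZ}. You are in fact more explicit than the paper, which dispatches the argument in one sentence; your additional care in checking that orientation reversal preserves the horizontal/skew dichotomy and the horizontality of the forest is the one genuinely new structural point in the $\GbN$ setting, and it is handled correctly.
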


Let us stress that, just as in Theorem~\ref{thm:MTKZN}, cancellations may occur in the right-hand side of the equation of Corollary~\ref{cor:symN}. When $a_{ij}=a_{ji}$, we can easily factor $\Delta$ as a product of rectangular matrices. Provided we are in a case where it holds for the $\tau$-$\det$\footnote{\label{footnote}This is the case when $H$ is commutative and $\tau=\mathrm{id}$ or when $H=\mathbb{H}$, $h$ is unitary, and $\tau=\Re$.}, the Cauchy--Binet formula implies that the coefficients of the monomials $a_{\Fe}$ such that the degree of $a_{ij}$ exceeds $N$ are zero. However, we do not know a direct proof of this fact.

\section{Applications}

\subsection{The case $N=1$}

We recover Theorem~\ref{thm:MTKZ} and Corollary~\ref{cor:sym}.

\subsection{The unitary case}

When $H=\R,\C$ or $\H$ and the matrices $h_{ij}$ are unitary, we have for any cycle $c\in\GbN$ that $\tau(h_c)$ and $\tau(h_{c^{-1}})$ are complex conjugates so that we can slightly simplify the equation in Corollary~\ref{cor:symN} using real parts, writing $2\Re(\tau(h_c))$ in place of $\tau(h_c)+\tau(h_{c^{-1}})$. Moreover, in that case, the Cauchy--Binet formula alluded to right above shows that the coefficient of each monomial $a_{\Fe}$ is nonnegative.

\subsection{The case of block-diagonal matrices}

If the matrices $h_{ij}$ are block-diagonal in a consistent way, then the determinant factorises, a fact that is clear from the point of view of the left-hand side of the formula.

\subsection{Simplicial complexes}

Analogs of the classical matrix-tree theorem have been proven for simplicial complexes, starting with Kalai~\cite{Kalai} and followed by works of Adin~\cite{Adin}, Lyons~\cite{Lyons} and Duval--Klivans--Martin~\cite{DKM} among others.

Recall that a simplicial complex $\bC$ on the vertex set $\bC_0=\{1,\ldots, v\}$ is a collection of non-empty subsets of $\bC_0$, called cells, which contains the singletons $\{1\},\ldots,\{v\}$ and which contains every non-empty subset of every set that it contains.

For concreteness we consider here the simplicial complex $\bC$ of all non-empty subsets of $\bC_0$ of cardinality less than or equal to $d+1$ for some fixed $d\in\{1,\ldots,v-1\}$. For any $k\in \{0,\ldots,d\}$, it contains ${v \choose k+1}$ subsets of cardinality $k+1$, which we call cells of dimension $k$, or $k$-cells.

A graph is a simplicial complex of dimension $1$, and its Laplacian is an operator acting on $0$-chains. In higher dimension, the analogs of the matrix-tree theorem concern a natural operator on $(d-1)$-chains, the so-called `top-down' Laplacian (coming from the structure of chain complex with boundary operator $\partial$ and coboundary operator $\delta$ and defined as $\Delta=\partial\delta$). Here we consider a version of this operator twisted by elements of a ring $H$. For the purposes of this short paper, and in tune with Zeilberger's opening lines of~\cite{Zeilberger}, we do not detail the definition of this operator from the algebraic-topological point of view of the chain complex, but rather directly define it combinatorially as a matrix indexed by the set of $(d-1)$-cells. This matrix depends on a choice of orientation of the cells, but in a way that does not affect its principal minors.

Let $\bC_{d-1}$ be the set of all $(d-1)$-cells of $\bC$. We call two elements $\sigma,\tau$ of $\bC_{d-1}$ {\em adjacent}, and we write $\sigma\sim\tau$, if $\sigma\cup\tau$ is a $d$-cell. We denote by $\E$ the set of ordered pairs of adjacent $(d-1)$-cells in $\bC$. To each $(\sigma,\tau)\in\E$ we assign an indeterminate $a_{\sigma\tau}$ and an element $h_{\sigma\tau}$ of $H$.

We fix an arbitrary choice of orientation for each $(d-1)$-cell. This choice determines for each pair $\sigma \sim \tau$ a sign $\epsilon_{\sigma\tau}$, the exact definition of which need not worry us here. The important fact is that for any chain $c=(\sigma_1\ldots\sigma_k)$ of adjacent $(d-1)$-cells with $\sigma_k\sim\sigma_1$, the product of the signs $\varepsilon_{\sigma_j\sigma_{j+1}}$ when $j$ runs cyclically through $\{1,\ldots, k\}$, denoted by $\varepsilon_c$, is independent of the choice of orientation of the cells $\sigma_j$.\footnote{As pointed out to us by the referee, the above combinatorial setup can be extended from simplicial complexes to finite CW complexes by allowing the signs $\varepsilon_{\sigma\tau}$ to become arbitrary integers. See~\cite{DKM-survey} for a survey on spanning trees in the cellular case.}

We can now write the non-zero entries of the `matrix'\footnote{To be precise, we did not specify an ordering of the indices so it is only a matrix up to permutation, a fact which bears no consequence on the computation of its principal minors.} $\dDelta$ by setting, for all $(\sigma,\tau)\in\E$,
\begin{equation}\label{eq:C1}\dDelta_{\sigma\tau}=\red{-\varepsilon_{\sigma\tau}h_{\sigma\tau}a_{\sigma\tau}}
\end{equation} 
and for all $\sigma\in\bC_{d-1}$, 
\begin{equation}\label{eq:C2}
\dDelta_{\sigma\sigma}=\frac{1}{d}\sum_{\substack{\tau\in \bC_{d-1}\setminus \{\sigma\}\\ \tau\sim \sigma}}a_{\sigma\tau}\,.
\end{equation}

This operator $\Delta$ can indeed be written in the form $\partial \delta$ whenever the weights satisfy some symmetry conditions, namely: for $\sigma\sim\tau$, the weight $a_{\sigma\tau}$ is only a function of the $d$-cell $\rho=\sigma\cup \tau$ and moreover there exist weights $h_{\sigma\rho}$ and $h_{\rho\tau}$ such that $h_{\sigma\tau}=h_{\sigma\rho}h_{\rho\tau}$.

The similarity between Eqs.~\eqref{eq:C1} \& \eqref{eq:C2} and Eqs.~\eqref{eq:D1} \& \eqref{eq:D2} suggests to look at $\dDelta$ as a Laplacian matrix on the weighted graph $\tilde{\G}=(\bC_{d-1},\E)$ with weights $\tilde{a}_{\sigma\tau}=a_{\sigma\tau}/d$ and $\tilde{h}_{\sigma\tau}=d\, \varepsilon_{\sigma\tau}h_{\sigma\tau}$, when $\sigma\sim\tau$; and $\tilde{a}_{\sigma\tau}=0$ and $\tilde{h}_{\sigma\tau}=1$, otherwise.

We are now exactly in the setup of Section~\ref{sec:MTTN} and we can thus use our Theorem~\ref{thm:MTKZN} to compute the principal minors of $\dDelta$. For simplicity, we only state the $N=1$ case, which is thus a corollary of Theorem~\ref{thm:MTKZ}. Fix $m\in\{1,\ldots, {v\choose d}\}$.

\begin{theorem}\label{thm:CW}
In the ring $S=K[a_{ij},(i,j)\in \E]$, we have the equality
\[{\det}_{\tau} \dDelta_{[m]}=\frac{1}{d^{m}}\sum_{\Fe\in \F_{m}} a_{\Fe} \prod_{c\in \Cyc(\Fe)} (1-d^{\ell(c)}\varepsilon_c\tau(h_{c}))\,.\]
\end{theorem}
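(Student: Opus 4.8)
The statement of Theorem~\ref{thm:CW} is essentially Theorem~\ref{thm:MTKZ} applied to the reweighted graph $\tilde{\G}$ that the authors have just introduced, so the plan is to substitute the weights $\tilde{a}_{\sigma\tau}=a_{\sigma\tau}/d$ and $\tilde{h}_{\sigma\tau}=d\,\varepsilon_{\sigma\tau}h_{\sigma\tau}$ into the conclusion of Theorem~\ref{thm:MTKZ} and track how the scalar factors propagate. First I would observe that the matrix $\dDelta$ defined by \eqref{eq:C1}--\eqref{eq:C2} is precisely the matrix $\Delta$ of Section~\ref{sec:MTT} built from the data $(\tilde{a},\tilde{h})$ on the graph $\tilde\G=(\bC_{d-1},\E)$: indeed $\dDelta_{\sigma\tau}=-\varepsilon_{\sigma\tau}h_{\sigma\tau}a_{\sigma\tau}=-\tilde{h}_{\sigma\tau}\tilde{a}_{\sigma\tau}$ matches \eqref{eq:D1}, and $\dDelta_{\sigma\sigma}=\frac{1}{d}\sum_{\tau\sim\sigma}a_{\sigma\tau}=\sum_{\tau}\tilde{a}_{\sigma\tau}$ matches \eqref{eq:D2} once we extend $\tilde{a}_{\sigma\tau}=0$ for non-adjacent pairs. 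Hence Theorem~\ref{thm:MTKZ} applies verbatim and gives ${\det}_\tau \dDelta_{[m]}=\sum_{\Fe\in\F_m}\tilde{a}_\Fe\prod_{c\in\Cyc(\Fe)}(1-\tau(\tilde{h}_c))$, where $\F_m$ is the set of cycle-and-well-rooted spanning forests in $\tilde\G$.

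The remaining work is purely bookkeeping of the scalar constants $d$. Since a forest $\Fe\in\F_m$ has exactly $m$ edges (one out of each inner cell) and each carries the factor $1/d$, I would write $\tilde{a}_\Fe=\prod_{(\sigma,\tau)\in\Fe}(a_{\sigma\tau}/d)=d^{-m}a_\Fe$, which pulls out the global prefactor $d^{-m}$. For the cycle factors, for a cycle $c$ of length $\ell(c)=r$ visiting cells $\sigma_1,\ldots,\sigma_r$, the holonomy is $\tilde{h}_c=\tilde{h}_{\sigma_1\sigma_2}\cdots\tilde{h}_{\sigma_r\sigma_1}=\prod_j(d\,\varepsilon_{\sigma_j\sigma_{j+1}}h_{\sigma_j\sigma_{j+1}})=d^{r}\Big(\prod_j\varepsilon_{\sigma_j\sigma_{j+1}}\Big)h_c=d^{\ell(c)}\varepsilon_c\,h_c$, using the definition of $\varepsilon_c$ recalled just before the theorem and the definition $h_c=h_{\sigma_1\sigma_2}\cdots h_{\sigma_r\sigma_1}$. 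Here I must be slightly careful about centrality: the factors $d$ and $\varepsilon_{\sigma_j\sigma_{j+1}}$ are central scalars, so they commute out of the ordered product and out of $\tau$, giving $\tau(\tilde{h}_c)=d^{\ell(c)}\varepsilon_c\,\tau(h_c)$. Substituting yields $\prod_{c}(1-\tau(\tilde{h}_c))=\prod_{c}(1-d^{\ell(c)}\varepsilon_c\,\tau(h_c))$, which together with the $d^{-m}$ prefactor is exactly the claimed identity.

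The only genuinely substantive points to verify, and the main potential obstacle, are the identification of the combinatorial index sets and the legitimacy of factoring the scalars through $\tau$. The combinatorial sets $\F_m$, $\Cyc(\Fe)$, and the lengths $\ell(c)$ are those of the graph $\tilde\G$ whose vertices are the $(d-1)$-cells; I would note that extending $\tilde{a}$ by zero on non-adjacent pairs kills all forests that use a non-edge of $\tilde\G$, so the sum over $\F_m$ is effectively over forests respecting the adjacency $\sim$, matching the stated sum. For the scalar extraction, the subtle part is that $\varepsilon_c$ is well-defined (independent of the orientation choices) precisely by the property stated before the theorem, so $\tilde h_c$ is independent of those choices even though the individual $\tilde h_{\sigma_j\sigma_{j+1}}$ are not; this is what guarantees that the final expression is well-posed. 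No hard estimates or cancellation arguments are needed beyond those already established in Theorem~\ref{thm:MTKZ}, so I expect this proof to be short: set up the dictionary, invoke Theorem~\ref{thm:MTKZ}, and simplify.
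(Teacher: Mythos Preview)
Your proposal is correct and follows exactly the route the paper indicates: the paper states just before Theorem~\ref{thm:CW} that $\dDelta$ is the Laplacian of $\tilde\G$ with weights $\tilde a_{\sigma\tau}=a_{\sigma\tau}/d$ and $\tilde h_{\sigma\tau}=d\,\varepsilon_{\sigma\tau}h_{\sigma\tau}$, and then deduces the result as a corollary of Theorem~\ref{thm:MTKZ}. Your write-up simply spells out the bookkeeping (the $d^{-m}$ from $\tilde a_\Fe$ and the $d^{\ell(c)}\varepsilon_c$ from $\tilde h_c$, with the central scalars commuting through $\tau$), which the paper leaves implicit.
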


As in the discussion at the end of Section~\ref{sec:MTTN}, an application of the Cauchy--Binet formula (when it holds, see Footnote~\ref{footnote}) in the case where the weights are symmetric enough that $\dDelta=\partial\delta$ (which means in particular that for all $\sigma\sim\tau$, the weight $a_{\sigma\tau}$ is a function $x_\rho$ of the $d$-cell $\rho=\sigma\cup\tau$) shows the cancellation of many terms in the sum: all terms for which the monomials $a_{\Fe}$ contain $x_\rho$ with degree exceeding~$1$.\footnote{In that special case, the equation of Theorem~\ref{thm:CW} holds with the sum restricted to a certain subset of $\F_{m}$.} Moreover the operator $\dDelta$ then has a non-zero kernel (the image of the coboundary operator $\delta$ on $(d-2)$-cells) so all minors cancel for $m$ large enough. In particular, choosing $h=1$ and $m={v\choose d}$, the determinant is zero, something which is not clearly apparent from the right-hand side of the formula.

The case $h=1$ has been studied considerably, and in that case, the coefficient of each monomial $x_{\Fe}$ in the maximal non-zero principal minors of $\Delta$ is the square of the torsion of a certain $(d-1)$-subcomplex called a simplicial spanning tree.\footnote{By torsion, we mean the cardinality of a certain torsion subgroup of the homology of that subcomplex.} Our formula expresses this torsion in terms of a weighted count of cycles. This can be compared to the formulas of~\cite{Bernardi-Klivans} expressing the torsion in terms of related combinatorial sums involving cancellations. 

Are there direct proofs of these identities obtained by comparing two polynomial expansions? In particular, can one compare our Theorem~6.1 with Theorem~31 and Remark~32 of~\cite{Bernardi-Klivans} and go from one statement to the other? Our statement is yet another evidence that homological quantities may be expressed in geometrico-combinatorial terms as shown by Bernardi and Klivans. We leave these open questions to the interested reader.

\section{Loose ends}

\subsection{Horizontal edges}\label{sec:horizontal}

As noted in the course of the proof of Theorem~\ref{thm:MTKZN}, there was something arbitrary about interpreting the black edges as horizontal.

By averaging, in the proof of Theorem~\ref{thm:MTKZN}, over all possible inclinations that we can assign to black edges (not merely the \emph{horizontal} one), or equivalently by applying Theorem \ref{thm:MTKZ} to the graph $\GbN$, we obtain a variant of Theorem~\ref{thm:MTKZN}. The main difference is that we now sum over \emph{all} cycle-and-well-rooted spanning forests of $\GbN$. There are many more terms in the sum (which thus cancel out somehow), but the formula is slightly slicker. We leave the easy details to the interested readers.

\begin{theorem}\label{thm:MTTNall}
With the notation of Section \ref{sec:MTTN}, we have, in the ring $S=K[a_{ij},(i,j)\in \E]$, the equality
\[{\det}_{\tau}  \Delta_{[Nm]}=\frac{1}{N^{Nm}}\sum_{\Fe\in \F_{m,N}} a_{\Fe} \prod_{c\in \Cyc(\Fe)}\big(1-N^{\ell(c)}\tau(\hb_{c})\big)\,.\]
\end{theorem}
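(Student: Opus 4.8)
The plan is to deduce the statement directly from Theorem~\ref{thm:MTKZ} by applying it to the enlarged graph $\GbN$, the only genuinely new ingredient being a careful accounting of the powers of $N$. The key observation is that, although $\Delta_{[Nm]}$ is \emph{not} itself a Laplacian of the form \eqref{eq:D1}--\eqref{eq:D2} on $\GbN$, its scaling $N\Delta_{[Nm]}$ is. Regard $\GbN$ as a graph on the vertex set $\V\times\{1,\ldots,N\}$ with well $\W\times\{1,\ldots,N\}$ and $Nm$ inner vertices $\U\times\{1,\ldots,N\}$, and assign to the edge $((i,k),(j,l))$ the indeterminate $a_{ij}$ together with the holonomy $N\,\hb_{((i,k),(j,l))}\in H$ (recording the absence of vertical edges by giving them zero $a$-weight). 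The associated Laplacian has off-diagonal entry $-N\,\hb_{((i,k),(j,l))}\,a_{ij}=N\,\Delta_{(i,k),(j,l)}$, and, summing over the $N$ edges above each neighbouring vertex, diagonal entry $\sum_{j\neq i}\sum_{l=1}^{N}a_{ij}=N\sum_{j\neq i}a_{ij}=N\,\Delta_{(i,k),(i,k)}$. Thus it equals $N\Delta_{[Nm]}$ exactly, the factor $N$ on the diagonal being precisely what reconciles the naive weight $a_{ij}$ with the Laplacian normalisation.

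First I would feed this Laplacian into Theorem~\ref{thm:MTKZ}. Its cycle-and-well-rooted spanning forests are exactly the elements of $\F_{m,N}$; such a forest has $Nm$ edges, so its product of $a$-weights is $a_{\Fe}$; and since $N$ is a central integer scalar it factors out of each cycle product, so the holonomy of a cycle $c$ is $N^{\ell(c)}\hb_{c}$ and hence $\tau$ of it is $N^{\ell(c)}\tau(\hb_{c})$ by $\Z$-linearity of $\tau$. Theorem~\ref{thm:MTKZ} therefore gives
\[
{\det}_{\tau}(N\Delta_{[Nm]})=\sum_{\Fe\in\F_{m,N}}a_{\Fe}\prod_{c\in\Cyc(\Fe)}\big(1-N^{\ell(c)}\tau(\hb_{c})\big).
\]

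It then remains to strip off the global scaling. Here I would use the homogeneity of the $\tau$-determinant under multiplication by a central integer: in the defining sum \eqref{eq:deftau} for an $Nm\times Nm$ matrix, replacing $M$ by $NM$ multiplies the contribution of each cycle of length $r$ by $N^{r}$, and since the cycle lengths of any fixed permutation (fixed points counted as cycles of length $1$) sum to $Nm$, every permutation acquires the same overall factor $N^{Nm}$. Hence ${\det}_{\tau}(N\Delta_{[Nm]})=N^{Nm}\,{\det}_{\tau}(\Delta_{[Nm]})$, and dividing by $N^{Nm}$ yields the announced identity.

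I expect no serious obstacle here; the entire content is the bookkeeping of two independent sources of $N$: the global $N^{-Nm}$ coming from the $Nm$ edges of a forest (equivalently, from the scaling homogeneity of ${\det}_{\tau}$), and the local $N^{\ell(c)}$ per cycle coming from the holonomies along it. The one point deserving a remark is the division by $N^{Nm}$: the identity is cleanest over $S[1/N]$, that is, when $N$ is invertible in $K$; since the left-hand side lies in $S$, it also says that the forest sum on the right is divisible by $N^{Nm}$ in $S$. Finally, as in the remark preceding the statement, the sum now runs over \emph{all} of $\F_{m,N}$ rather than over the horizontal forests of Theorem~\ref{thm:MTKZN}; the two expressions agree through the ``averaging over inclinations'' of the black edges, which is automatic in this derivation and accounts for the many extra, mutually cancelling terms.
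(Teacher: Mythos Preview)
Your proof is correct and follows precisely one of the two routes the paper itself indicates, namely applying Theorem~\ref{thm:MTKZ} directly to the graph $\GbN$ with rescaled edge weights; this is also exactly the device used in Section~6.4 (weights $\tilde a=a/d$, $\tilde h=d\,\varepsilon h$) to derive Theorem~\ref{thm:CW}. The paper does not spell out the details beyond ``we leave the easy details to the interested readers'', and your bookkeeping of the two sources of $N$ (the global $N^{Nm}$ from the $Nm$ edges and the local $N^{\ell(c)}$ from each cycle's holonomy), together with your remark on invertibility of $N$, fills in exactly what is needed.
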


We note the close similarity between the identities in Theorem~\ref{thm:MTTNall} and Theorem~\ref{thm:CW} about simplicial complexes. This suggests a maybe not so surprising combinatorial analogy between the study of vector bundles of rank $1$ over simplicial complexes of dimension $d$ with $v=d n$ vertices, and the study of vector bundles of rank $N=d$ over graphs with $n$ vertices.

This similarity also suggests that there could exist a formula for simplicial complexes of the more concise form of Theorem~\ref{thm:MTKZN}.

\subsection{Gauge invariance}

One shortcoming of our Theorem~\ref{thm:MTKZN} is that the terms in the sum of the right-hand side are not manifestly gauge-invariant, nor non-negative when holonomies are unitary. We propose solutions to this problem in an upcoming work.

\subsection{Non-principal minors}

One question we have not adressed in this note is the computation of non-principal minors of discrete Laplacians. In the classical case, it is known that these minors also enumerate spanning trees. 

Chaiken~\cite[p. 326]{Chaiken} computed a formula for non-principal minors of so-called `gain-graphs/voltage-graphs' which in our language are graphs with holonomy taking values in an abelian group.

It is possible to work out formulas in our case too, and we leave this endeavour to the curious reader.

\bigskip

{\small \textsc{Acknowledgments.} We thank David Wilson for teaching us Zeilberger's proof in an enlightening way using coloured chalks on a blackboard of the Newton Institute in Cambridge during the Random Geometry programme in 2015. Unknowingly, it planted the seed for the above proofs. We also thank Omid Amini and Gr\'egory Miermont for stimulating and inspiring discussions which encouraged us to write this note. We are also grateful to Omid Amini for his help in checking Theorem \ref{thm:CW}. Finally, we thank the anonymous referee for a careful reading and helpful feedback.}

%%%%%%%%%%%%%%%%
%%%      Bibliographie      %%%
%%%%%%%%%%%%%%%%

\def\@rst #1 #2other{#1}
\renewcommand\MR[1]{\relax\ifhmode\unskip\spacefactor3000 \space\fi \MRhref{\expandafter\@rst #1 other}{#1}}
\renewcommand{\MRhref}[2]{\href{http://www.ams.org/mathscinet-getitem?mr=#1}{MR#1}}
\newcommand{\arXiv}[1]{\href{http://arxiv.org/abs/#1}{arXiv:#1}}

\bibliographystyle{hmralphaabbrv}
\bibliography{colourmatrixtree}

\end{document}